\newtheorem{theorem}{Theorem}[section]
\newtheorem{lemma}[theorem]{Lemma}
\theoremstyle{definition}
\newtheorem{conjecture}[theorem]{Conjecture}
\newtheorem{counterexample}[theorem]{Counterexample}
\numberwithin{equation}{section}
\DeclareMathOperator{\Tr}{Tr}
\newcommand{\R}{\mathbb{R}}
\newcommand{\C}{\mathbb{C}}
\newcommand{\mba}{\mathbf{a}}
\newcommand{\mbb}{\mathbf{b}}
\newcommand{\Diag}{\text{Diag}}
\newcommand{\mbx}{\mathbf{x}}
\newcommand{\mby}{\mathbf{y}}
\newcommand{\mbv}{\mathbf{v}}
\newcommand{\diff}{\text{ d}}
\newcommand{\s}{\textcolor{white}{........}}
\title{Matrix Rearrangement Inequalities Revisited}
\author{
\textbf{Victoria M~Chayes}\\
Department of Mathematics\\
Rutgers University\\
Piscataway, NJ 08854 \\
\texttt{vc362@math.rutgers.edu}
}
\begin{document}

\maketitle

\begin{abstract}
Let $||X||_p=\Tr[(X^\ast X)^{p/2}]^{1/p}$ denote the $p$-Schatten norm of a matrix $X\in M_{n\times n}(\C)$, and $\sigma(X)$ the singular values with $\uparrow$ $\downarrow$ indicating its increasing or decreasing rearrangements. We wish to examine inequalities between $||A+B||_p^p+||A-B||_p^p$, $||\sigma_\downarrow(A)+\sigma_\downarrow(B)||_p^p+||\sigma_\downarrow(A)-\sigma_\downarrow(B)||_p^p$, and $||\sigma_\uparrow(A)+\sigma_\downarrow(B)||_p^p+||\sigma_\uparrow(A)-\sigma_\downarrow(B)||_p^p$ for various values of $1\leq p<\infty$. It was conjectured in \cite{Carlen2006} that a universal inequality $||\sigma_\downarrow(A)+\sigma_\downarrow(B)||_p^p+||\sigma_\downarrow(A)-\sigma_\downarrow(B)||_p^p\leq ||A+B||_p^p+||A-B||_p^p \leq ||\sigma_\uparrow(A)+\sigma_\downarrow(B)||_p^p+||\sigma_\uparrow(A)-\sigma_\downarrow(B)||_p^p$ might hold for $1\leq p\leq 2$ and reverse at $p\geq 2$, potentially providing a stronger inequality to the generalization of Hanner's Inequality to complex matrices $||A+B||_p^p+||A-B||_p^p\geq (||A||_p+||B||_p)^p+|||A||_p-||B||_p|^p$. We extend some of the cases in which the inequalities of \cite{Carlen2006} hold, but offer counterexamples to any general rearrangement inequality holding. We simplify the original proofs of \cite{Carlen2006} with the technique of majorization. This also allows us to characterize the equality cases of all of the inequalities considered. We also address the commuting, unitary, and $\{A,B\}=0$ cases directly, and expand on the role of the anticommutator. In doing so, we extend Hanner's Inequality for self-adjoint matrices to the $\{A,B\}=0$ case for all ranges of $p$.
\keywords{Matrix Inequality \and Hanner's Inequality  \and p-Schatten Norm \and Majorization}
\end{abstract}

\section{Introduction}
\label{intro}

\s It has been of great interest to extend Hanner's Inequality for $L^p$ spaces \begin{equation}
||f+g||_p^p+||f-g||_p^p\geq (||f||_p+||g||_p)^p+|||f||_p-||g||_p|^p
\end{equation}
for $1\leq p\leq 2$ to the non-communative analogue in $C^p$ \begin{equation}\label{h2}
||A+B||_p^p+||A-B||_p^p\geq (||A||_p+||B||_p)^p+|||A||_p-||B||_p|^p.
\end{equation}

\s In \cite{Carlen2006}, Carlen and Lieb proposed the following two conjectures for their potential pertinence to proving \ref{h2}: 
\begin{conjecture}\label{c1}
For all $1 \leq p \leq 2$, and all complex-valued $n \times n$ matrices $A$ and $B$, \begin{equation}
||A+B||_p^p+||A-B||_p^p\geq ||\sigma_\downarrow(A)+\sigma_\downarrow(B)||_p^p+||\sigma_\downarrow(A)-\sigma_\downarrow(B)||_p^p.
\end{equation}
For $p > 2$, the inequality reverses.
\end{conjecture}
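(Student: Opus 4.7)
My plan is to attack the conjecture by majorization, organized around the pivot $p=2$: there the parallelogram law and unitary invariance of the $2$-Schatten norm immediately yield equality on both sides, so the conjectured inequality must reverse direction on either side of $p=2$ in a way determined by whether $x\mapsto x^p$ is super- or sub-quadratic. This anchor also fixes the direction of majorization one should target in each regime and provides a free sanity check on any candidate proof.

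I would first reduce to the self-adjoint (in fact positive) case via the standard block embedding $A\mapsto\begin{pmatrix}0&A\\ A^\ast&0\end{pmatrix}$, which turns singular values into eigenvalues in a doubled Hilbert space and preserves $p$-Schatten norms up to a uniform factor. After this reduction the right-hand side of the conjecture becomes a purely spectral expression in the sorted eigenvalues of the two matrices, and only the interaction of their eigenbases enters the left-hand side.

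The heart of the argument is then to establish a joint majorization between the singular-value multiset $\sigma(A+B)\sqcup\sigma(A-B)$ on the left and the paired scalar multiset $\{\sigma_i^\downarrow(A)+\sigma_i^\downarrow(B)\}\sqcup\{|\sigma_i^\downarrow(A)-\sigma_i^\downarrow(B)|\}$ on the right. Ky Fan's inequality $\sigma(A\pm B)\prec_w\sigma_\downarrow(A)+\sigma_\downarrow(B)$ and the Lidskii-type bound $|\sigma_\downarrow(A)-\sigma_\downarrow(B)|\prec_w\sigma(A-B)$ give partial-sum control of each block individually. If these could be welded into a single Schur-convex comparison of the two joint $2n$-element multisets, then applying $x\mapsto x^p$ termwise and summing would yield the inequality in the direction dictated by the convexity of $x^p$ on each side of $p=2$.

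The main obstacle I anticipate is exactly this joint step: Ky Fan and Lidskii each control one block at a time, but combining them into a single rearrangement comparison across both $A+B$ and $A-B$ simultaneously requires coordination between the two eigenbases that is not obviously available. Since the abstract announces counterexamples to any general rearrangement inequality of this type, I expect such a direct majorization approach to succeed only under additional structural hypotheses --- simultaneous diagonalizability, commutation, unitarity, or anticommutation $\{A,B\}=0$ --- with the failure in the general case reflecting a genuine obstruction rather than a weakness of the majorization technique alone.
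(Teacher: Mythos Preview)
The statement is a conjecture that the paper \emph{disproves}: Counterexamples~\ref{ce1} and~\ref{ce2} show it fails for general matrices, so no general proof exists. You concede this in your final paragraph, but the body of the sketch contains two concrete technical defects worth naming, since they explain \emph{why} the majorization route stalls rather than merely \emph{that} it does.

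First, the two block inequalities you invoke point in \emph{opposite} directions. Ky~Fan gives $\sigma(A+B)\prec_w\sigma_\downarrow(A)+\sigma_\downarrow(B)$, which for any convex increasing $f$ yields $\sum_i f(\sigma_i(A+B))\le\sum_i f(\sigma_i^\downarrow(A)+\sigma_i^\downarrow(B))$ --- the \emph{wrong} sign for the conjectured $1\le p\le 2$ inequality on that block. Lidskii gives $|\sigma_\downarrow(A)-\sigma_\downarrow(B)|\prec_w\sigma(A-B)$, which has the right sign. Two weak majorizations running in opposite directions cannot be ``welded'' into a single majorization of the joint $2n$-multisets; the obstruction is structural, not a mere coordination problem between eigenbases.

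Second, a majorization between the singular values themselves can never produce a reversal at $p=2$: $x\mapsto x^p$ is convex for every $p\ge 1$, so $\mathbf a\prec\mathbf b$ forces $\sum a_i^p\le\sum b_i^p$ on the whole range. Your ``super- or sub-quadratic'' remark gestures at the correct mechanism --- majorize the \emph{squares} (where the $p=2$ equality supplies the equal-sum condition) and apply $x\mapsto x^{p/2}$, concave for $p<2$ and convex for $p>2$ --- but the proposal never makes this explicit. The paper uses exactly this squared-majorization device in the anticommuting case, where $(A\pm B)^2=A^2+B^2$ makes $\lambda((A\pm B)^2)\prec\sigma(A)^2+\sigma(B)^2$ available. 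For the $A\ge B\ge 0$ case it abandons Schur-convexity altogether and instead compares resolvents pointwise in~$t$ via the integral representation $C^p=k_p\int_0^\infty\bigl(\tfrac{C}{t^2}-\tfrac{1}{t}+\tfrac{1}{t+C}\bigr)t^p\,dt$; the sign flip between $1<p<2$ and $2<p<3$ then comes from an extra factor of~$C$ in the integrand, not from any convexity switch.

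A minor correction: the block embedding $A\mapsto\begin{pmatrix}0&A\\ A^\ast&0\end{pmatrix}$ has eigenvalues $\pm\sigma_i(A)$, so it is self-adjoint but not positive as you state.
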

\begin{conjecture}\label{c2}
For all $1 \leq p \leq 2$, and all complex-valued $n \times n$ matrices $A$ and $B$, \begin{equation}
||A+B||_p^p+||A-B||_p^p\leq ||\sigma_\uparrow(A)+\sigma_\downarrow(B)||_p^p+||\sigma_\uparrow(A)-\sigma_\downarrow(B)||_p^p.
\end{equation}
For $p > 2$, the inequality reverses.
\end{conjecture}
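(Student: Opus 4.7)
The plan is to begin with the block-matrix identity $\|A+B\|_p^p + \|A-B\|_p^p = \bigl\|\bigl(\begin{smallmatrix} A & B \\ B & A \end{smallmatrix}\bigr)\bigr\|_p^p$, which follows from conjugation by the self-inverse unitary $\tfrac{1}{\sqrt{2}}\bigl(\begin{smallmatrix} I & I \\ I & -I \end{smallmatrix}\bigr)$. The right-hand side of Conjecture \ref{c2} then reads as the very same $p$-Schatten norm but with $A$ and $B$ replaced by the diagonal matrices $\Sigma_A^\uparrow$ and $\Sigma_B^\downarrow$ of their oppositely-sorted singular values. By polar decomposition and unitary invariance, I may assume $A, B \geq 0$ and that $A$ is already $\Sigma_A^\uparrow$; the remaining freedom is the unitary $V$ diagonalizing $B$. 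Conjecture \ref{c2} then reformulates as the assertion that among all such $V$, the block-matrix norm is maximized (for $1 \leq p \leq 2$, minimized for $p \geq 2$) when $V$ aligns $B$'s eigenbasis with $A$'s in reverse order.

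Next I would verify the scalar template. For $a, b \geq 0$ the function $\phi_p(a,b) = (a+b)^p + |a-b|^p$ satisfies $\partial_a \partial_b \phi_p = p(p-1)\bigl[(a+b)^{p-2} - (a-b)^{p-2}\bigr]$, which is non-positive for $1 \leq p \leq 2$ and non-negative for $p \geq 2$. Hence $\phi_p$ is submodular in the first regime and supermodular in the second, so by the Hardy--Littlewood--P\'olya rearrangement inequality the sum $\sum_i \phi_p(a_i, b_{\pi(i)})$ is extremal on the anti-sorted pairing in exactly the direction Conjecture \ref{c2} predicts. In the commuting diagonal case the conjecture is thus automatic; the real content is to promote this scalar extremality, through the block-matrix formulation, to all noncommuting configurations.

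The bridge I would try is majorization. I would aim to show that the singular values of $\bigl(\begin{smallmatrix} A & B \\ B & A \end{smallmatrix}\bigr)$ are weakly majorized by those of $\bigl(\begin{smallmatrix} \Sigma_A^\uparrow & \Sigma_B^\downarrow \\ \Sigma_B^\downarrow & \Sigma_A^\uparrow \end{smallmatrix}\bigr)$; applying the concave function $t \mapsto t^{p/2}$ (via a Ky Fan / Lidskii argument) for $p \leq 2$ would then invert the majorization into the desired sum inequality, with the reverse direction for $p \geq 2$ coming from convexity. To produce such a majorization I would differentiate the block norm along the one-parameter unitary families $B \mapsto e^{itH}B e^{-itH}$, compute the first variation in terms of the commutators $[H, A+B]$ and $[H, A-B]$, and try to show that all critical configurations lie on the sorted or anti-sorted orbits with Hessian of the expected sign.

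The main obstacle I anticipate is exactly this Hessian computation. The block structure couples $A+B$ and $A-B$ to the \emph{same} perturbation $H$, so the cross-term in the second variation is a spectral quantity in which $(p-1)(p-2)$ appears multiplied by differences that need not keep a single sign across the full unitary orbit. Moreover, the reduction from arbitrary $A, B$ to diagonal positive semidefinite representatives quietly discarded one half of each singular value decomposition, and the residual unitary freedom can genuinely enlarge $\|A \pm B\|_p$ beyond the anti-sorted diagonal bound when $A$ and $B$ fail to commute. I therefore expect this program to close only in the commuting, unitary, and anticommuting cases promised by the abstract --- where the discarded freedom is harmless --- and to fail in general, with counterexamples most likely arising from carefully tuned noncommuting $2 \times 2$ pairs that exploit exactly this Hessian sign ambiguity.
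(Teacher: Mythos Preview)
Your bottom line---that the conjecture fails in general, with $2\times 2$ counterexamples---is correct and matches the paper, and your scalar rearrangement argument in the second paragraph is essentially the commuting case (the paper's Theorem~\ref{ct}). But you misidentify which of the remaining special cases support the inequality, and this is exactly where the paper's disproof lives. You list the unitary and anticommuting cases as ones where your program should ``close''; in fact the unitary case is the paper's primary \emph{counterexample} to Conjecture~\ref{c2}. The key observation is that when $B$ has all singular values equal, $\sigma_\uparrow(B)=\sigma_\downarrow(B)$, so the right-hand sides of Conjectures~\ref{c1} and~\ref{c2} coincide; were both conjectures true, $\|A+B\|_p^p+\|A-B\|_p^p$ would be forced to equal the rearranged value for every such $B$. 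The paper's Theorem~\ref{tu} shows instead that $\|U+V\|_p^p+\|U-V\|_p^p\geq 2^p n$ for unitary $U,V$ and $1\leq p\leq 2$, with strict inequality unless $U=V$---the \emph{opposite} direction to Conjecture~\ref{c2}. Explicit $2\times 2$ pairs (Counterexamples~\ref{ce1} and~\ref{ce2}) then finish the job.

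Two technical steps in your sketch also fail independently of this. The polar-decomposition reduction to $A,B\geq 0$ is illegitimate: $\|A+B\|_p$ is not invariant under \emph{separate} left unitaries applied to $A$ and to $B$, so one cannot replace $A,B$ by $|A|,|B|$ without changing the left-hand side (you flag this later but rely on it in your first paragraph). And the majorization you propose---singular values of $\bigl(\begin{smallmatrix}A&B\\B&A\end{smallmatrix}\bigr)$ weakly majorized by those of the anti-sorted block---cannot produce an inequality that reverses at $p=2$: once such a weak majorization holds, feeding it the convex increasing map $t\mapsto t^p$ yields the same direction for every $p\geq 1$. The sign flip at $p=2$ in your scalar template comes from the mixed partial $\partial_a\partial_b\phi_p$, not from convexity of $t^p$ alone, and no single majorization relation between the two block matrices can encode it.
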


\s For these, the authors proved Conjecture \ref{c1} in the case $A\geq B\geq 0$ and $1\leq p\leq 2$; and proved Conjecture \ref{c2} in the case $A\geq |B|\geq 0$ and $1\leq p\leq 2$. We note a missing requirement in \cite{Carlen2006} used in the proof for Conjecture \ref{c2} in those conditions is also that $\sigma_n(A)\geq \sigma_1(B)$. Lemma 2.1 in \cite{Tomczak1974} proves that Conjecture \ref{c1} holds for all matrices and $p=2k$, $k\in \mathbb{N}$. To the best of our knowledge, no further work has been done on the subject.

\s If Conjecture \ref{c1} were true in general, with an additional application of Hanner's Inequality on the sequences of singular values, the non-commutative Hanner's Inequality for matrices would be proven in general; currently, it is only known for $A+B, A-B\geq 0$ for all $p$, or general $A$ and $B$ in the ranges $1\leq p\leq \frac{4}{3}$ and $p\geq 4$ \cite{Ball1994}. 

\s In this paper we extend the range of Conjecture \ref{c1} with the requirements $A\geq B\geq 0$ to $2\leq p\leq 3$, and we prove Conjecture \ref{c2} in the $A+B, A-B\geq 0$, $\sigma_n(A)\geq\sigma_1(B)$ case for the range $1\leq p\leq 3$. We prove both conjectures for the full range of $p$ in the commuting case. We prove Conjecture \ref{c1} in the case that $A$ and $B$ are both unitary, and in the case when $A$ and $B$ are self-adjoint and $\{A,B\}=0$. However, we demonstrate that both conjectures are false in general. Section 2 gives a background to majorization, which is the primary technique that we use in our proofs. Section 3 presents the extensions of the conjectures' requirements and ranges, and general counterexamples.

\s The key observation as to why the conjectures cannot hold in general is that if the matrix $B$ is taken to be unitary, all its singular values are equal to 1, and therefore there is no distinction between the ``aligned" and the ``up-down" rearrangements. If both conjectures were true, this would imply equality everywhere when $B$ is unitary, which can easily be numerically confirmed as false. 

\s The fact that these re-arrangement inequalities do not hold in general is notable, because the analogue to Conjecture \ref{c1} with complex functions and the spherically symmetric decreasing rearrangement is shown in \cite{Carlen2006} to hold. Therefore, we see directly that the non-commutativity of the matrices ruins a commutative identity. In disproving Conjecture \ref{c1} in general, we also rule it out as a method to attempt to extend Hanner's inequality to $C^p$. 

\s We will use the following notation throughout this paper: $\sigma(X)$ denotes the vector of singular values of a matrix $X$, assumed to be in descending order unless $\sigma_\uparrow(X)$ is specified; $\sigma_\downarrow(X)$ may then be used for emphasis. The norm $||\cdot||_p$ may either indicate the vector $p$-norm or the $p$-Schatten norm dependent on context. We use for a vector $\mbv$ the notation $[\mbv]$ to indicate the matrix $[\Diag(\mbv)]$. 

\section{Majorization}
\label{majorization}

\s Let $\mba,\mbb\in\R^n$ with components labeled in descending order $a_1\geq\dots\geq a_n$ and $b_1\geq\dots\geq b_n$. Then $\mathbf{b}$ weakly majorizes $\mathbf{a}$, written $\mathbf{a}\prec_{w} \bf{b}$, when \begin{equation}
\sum_{i=1}^ka_i\leq \sum_{i=1}^k b_i, \qquad 1\leq k \leq n
\end{equation}
and majorizes $\mba\prec\mbb$ when the final inequality is an equality. Weak log majorization $\mba\prec_{w(\log)}\mbb$ is similarly defined for non-negative vectors as \begin{equation}
\prod_{i=1}^ka_i\leq \prod_{i=1}^k b_i, \qquad 1\leq k \leq n
\end{equation}
with log majorization $\mba\prec_{(\log)}\mbb$ when the final inequality is an equality. An important fact is that log majorization and weak log majorization both imply weak majorization \cite{ANDO1989} [Lemma 1.8].

\s Note that it is not necessary that the vectors $\mba$ and $\mbb$ be in descending order---majorization is explicitly defined with respect the the rearrangements of the values in descending order. We define all of the above majorization for matrices, ie $A\prec B$ and all variations, when the singular values considered as a vector are majorized $\sigma(A)\prec\sigma(B)$. All operators stated for majorization (ie $f(\mba)$ or $\mba\mbb$) should be considered to be applied entrywise to the vectors (ie $(f(a_1),\dots,f(a_n))$ or $(a_1b_1,\dots,a_nb_n)$.

\s Majorization holds the following vital property:

\begin{theorem}{(Hardy, Littlewood, and P\'{o}lya \cite{Hardy1929} \cite{Hardy2}; Tomi\'{c}, Weyl \cite{tomic1949} \cite{Weyl1949} )}\label{maj} 
Suppose $\mba\prec_{w}\mbb$. Then for any function $f:\R\rightarrow\R$ that is increasing and convex on the domain containing all elements of $\mba$ and $\mbb$, \begin{equation}
\sum_{i=1}^n f(a_i)\leq\sum_{i=1}^n f(b_i).
\end{equation}
If $\mba\prec\mbb$, the `increasing' requirement can be dropped.
\end{theorem}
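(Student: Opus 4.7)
The plan is to reduce the inequality to a sum of manifestly nonnegative quantities via the subgradient inequality for convex functions, and then to exploit the partial-sum inequalities of weak majorization through an Abel summation.

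First, since the sums $\sum f(a_i)$ and $\sum f(b_i)$ are symmetric in the entries, I would assume without loss of generality that both $\mba$ and $\mbb$ are already arranged in descending order. For each $i$, choose a subgradient $c_i \in \partial f(a_i)$; monotonicity of $\partial f$ together with $a_1 \geq \cdots \geq a_n$ allows me to select $c_1 \geq c_2 \geq \cdots \geq c_n$ (e.g.\ by taking one-sided derivatives). The subgradient inequality gives
\begin{equation}
f(b_i) - f(a_i) \geq c_i(b_i - a_i), \qquad 1 \leq i \leq n.
\end{equation}
Summing over $i$ and writing $D_k := \sum_{i=1}^{k}(b_i - a_i)$, Abel summation yields
\begin{equation}
\sum_{i=1}^{n} c_i(b_i - a_i) \;=\; \sum_{k=1}^{n-1} (c_k - c_{k+1})\, D_k \;+\; c_n D_n.
\end{equation}

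By $\mba \prec_w \mbb$ each $D_k$ is nonnegative, and by the monotonicity of the subgradients each difference $c_k - c_{k+1}$ is nonnegative. If $f$ is additionally nondecreasing then $c_n \geq 0$, so the entire right-hand side is nonnegative, proving $\sum f(b_i) \geq \sum f(a_i)$. In the full majorization case $\mba \prec \mbb$, one has $D_n = 0$, so the boundary term $c_n D_n$ vanishes identically and the conclusion no longer requires any sign control on $c_n$; consequently the monotonicity hypothesis on $f$ can be dropped, yielding the second half of the statement.

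The main technical nuisance, rather than a genuine obstacle, is the coherent choice of subgradients when $f$ is merely convex rather than differentiable: one needs a nonincreasing sequence $(c_i)$ compatible with the values $a_i$. Selecting right derivatives makes this automatic, because the right derivative of a convex function is itself nondecreasing. All remaining steps — the permutation reduction, the Abel rearrangement, and the pointwise comparison with the majorization hypothesis — are routine.
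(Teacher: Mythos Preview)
The paper does not actually supply a proof of this theorem: it is quoted as a classical background result with citations to Hardy--Littlewood--P\'olya and Tomi\'c--Weyl, and then used as a tool. So there is nothing in the paper to compare against.

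Your argument is correct and is in fact one of the standard proofs. The only point worth tightening is the handling of subgradients when several $a_i$ coincide: your remark that taking the right derivative $f'_+(a_i)$ automatically produces a nonincreasing sequence $(c_i)$ is exactly what is needed, since $f'_+$ is nondecreasing on the real line and hence $a_i \ge a_{i+1}$ gives $c_i = f'_+(a_i) \ge f'_+(a_{i+1}) = c_{i+1}$ with equality when $a_i = a_{i+1}$. With that choice fixed, the subgradient inequality, the Abel summation identity
\[
\sum_{i=1}^n c_i(b_i-a_i)=\sum_{k=1}^{n-1}(c_k-c_{k+1})D_k + c_n D_n,
\]
and the sign analysis of $D_k \ge 0$, $c_k - c_{k+1} \ge 0$, and (under the increasing hypothesis) $c_n \ge 0$ are all routine and valid. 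The observation that $D_n = 0$ in the full majorization case, killing the need for $c_n \ge 0$, is exactly the right way to recover the second clause.
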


\s An immediate yet highly useful lemma follows:

\begin{lemma}\label{weakpower}
Let $\mba,\mbb\in\R_n^+$. Suppose $\mba\prec_{w}\mbb$. Then $\mba^s\prec_{w}\mbb^s$ for all $s\geq 1$.
\end{lemma}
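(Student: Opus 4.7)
The plan is to reduce the defining partial-sum inequalities of $\mba^s\prec_{w}\mbb^s$ to repeated applications of Theorem \ref{maj}. Since $f(x)=x^s$ is strictly increasing on $\R_+$ when $s\geq 1$, applying it entrywise preserves the descending order of a nonnegative vector; hence the descending rearrangement of $\mba^s$ is exactly $(a_{[1]}^s,\dots,a_{[n]}^s)$ and similarly for $\mbb^s$. Taking $\mba$ and $\mbb$ to be arranged in descending order from the start, the claim $\mba^s\prec_{w}\mbb^s$ therefore reduces to verifying $\sum_{i=1}^k a_i^s\leq \sum_{i=1}^k b_i^s$ for each $k=1,\dots,n$.

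For a fixed $k$, I would consider the truncated vectors $\mba^{(k)}:=(a_1,\dots,a_k)$ and $\mbb^{(k)}:=(b_1,\dots,b_k)$, which are nonnegative and in descending order in $\R^k$. The hypothesis $\mba\prec_{w}\mbb$ immediately yields $\mba^{(k)}\prec_{w}\mbb^{(k)}$, because for every $j\leq k$ the $j$-th partial sum of $\mba^{(k)}$ agrees with that of $\mba$, and likewise for $\mbb$, so the defining partial-sum inequalities carry over verbatim.

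Next, I would apply Theorem \ref{maj} to the pair $(\mba^{(k)},\mbb^{(k)})$ with $f(x)=x^s$. This function is increasing and convex on $[0,\infty)$ precisely because $s\geq 1$ and the entries are nonnegative, so the hypotheses of the theorem are met; the conclusion is exactly $\sum_{i=1}^k a_i^s\leq\sum_{i=1}^k b_i^s$. Since $k$ was arbitrary, all the partial-sum inequalities follow, giving $\mba^s\prec_{w}\mbb^s$.

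There is essentially no serious obstacle here. The only point worth flagging is that Theorem \ref{maj} produces a single inequality between \emph{full} sums over all $n$ indices, so a truncation device is required to extract the partial-sum information demanded by weak majorization. The nonnegativity hypothesis $\mba,\mbb\in\R_n^+$ is used to keep $x\mapsto x^s$ in a regime where it is both increasing and convex; without it the argument would fail for non-integer $s$.
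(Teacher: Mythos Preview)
Your proof is correct and is exactly the standard way to extract this lemma from Theorem~\ref{maj}; the paper itself does not give a proof but simply declares the lemma an ``immediate'' consequence of Theorem~\ref{maj}, and your truncation argument is the natural unpacking of that claim. Nothing to add.
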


\s Log majorization also allows us to characterize equality cases:

\begin{lemma}(Hiai \cite{GTEqualityCases} [Lemma 2.2])\label{strict}
Let $\Phi:\R^+\rightarrow\R^+$ be a strictly convex increasing function. Then $\mba\prec_{(\log)}\mbb$ and $\sum_{i=1}^n\Phi(a_i)=\sum_{i=1}^n\Phi(b_i)$ implies $\mba=\Theta\mbb$ for some permutation matrix $\Theta$.
\end{lemma}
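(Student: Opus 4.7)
The plan is to reduce the log-majorization hypothesis to ordinary majorization by taking logarithms, and then apply the equality case of Theorem \ref{maj} for strictly convex functions. Because $\mba \prec_{(\log)} \mbb$ with $\prod_i a_i = \prod_i b_i$, taking logarithms componentwise converts log majorization into ordinary majorization $\log \mba \prec \log \mbb$ in $\mathbb{R}^n$. (If some components vanish, the product equality forces the number of zeros on each side to match, so one can strip matching zeros and work with the remaining positive coordinates.)

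Next I would introduce the auxiliary function $\psi(t) := \Phi(e^t)$ on $\mathbb{R}$. Since $\exp$ is strictly increasing and $\Phi$ is increasing, $\psi$ is strictly increasing; since $\exp$ is strictly increasing and convex and $\Phi$ is strictly convex and increasing, the composition $\psi$ is strictly convex. Thus $\psi$ is a strictly convex, increasing function to which Theorem \ref{maj} applies, and $\sum_i \psi(\log a_i) = \sum_i \Phi(a_i) = \sum_i \Phi(b_i) = \sum_i \psi(\log b_i)$ by hypothesis.

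The crux is now the equality case: if $\mathbf{x} \prec \mathbf{y}$ and $\psi$ is strictly convex with $\sum \psi(x_i) = \sum \psi(y_i)$, then $\mathbf{x}$ is a permutation of $\mathbf{y}$. I would establish this via Birkhoff's theorem: write $\mathbf{x} = D\mathbf{y}$ for a doubly stochastic matrix $D$; Jensen's inequality gives $\psi(x_i) = \psi\bigl(\sum_j D_{ij} y_j\bigr) \leq \sum_j D_{ij} \psi(y_j)$ rowwise, and summing recovers the HLP inequality by double stochasticity. Equality in each Jensen step, combined with \emph{strict} convexity, forces the support of each row of $D$ to consist of indices $j$ on which $y_j$ takes a single value, and $x_i$ equals that value; matching multiplicities via column stochasticity then yields that $\mathbf{x}$ is a permutation of $\mathbf{y}$. (An equivalent route is to use the T-transform characterization of majorization: each nontrivial T-transform strictly decreases $\sum \psi$ under strict convexity, so saturation forces all T-transforms to be trivial.)

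Applied with $\mathbf{x} = \log \mba$ and $\mathbf{y} = \log \mbb$, this produces a permutation $\Theta$ with $\log \mba = \Theta \log \mbb$, and exponentiating yields $\mba = \Theta \mbb$. I expect the main obstacle to be the strict equality case of Hardy--Littlewood--P\'olya; the Birkhoff/Jensen argument is standard but must be written carefully to track which row supports of $D$ are forced to be degenerate, and to handle possible repeated values in $\mbb$ (where the resulting permutation is not unique but nonetheless exists).
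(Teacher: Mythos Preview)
The paper does not supply its own proof of this lemma; it is quoted directly from Hiai with a citation and nothing further, so there is no argument in the paper to compare against. Your proposal stands on its own, and the core strategy is sound and standard: passing from $\mba\prec_{(\log)}\mbb$ to $\log\mba\prec\log\mbb$ and applying the strict-convexity equality case of Hardy--Littlewood--P\'olya to $\psi=\Phi\circ\exp$ is exactly the natural reduction. Your check that $\psi$ is strictly convex is correct (strict convexity of $\Phi$ plus strict monotonicity of $\exp$ suffices), and the Birkhoff/Jensen treatment of the equality case is the standard one.

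The one soft spot is the parenthetical about zeros. Log majorization forces $\mba$ to have \emph{at least} as many zero entries as $\mbb$ (if $b_j=0$ then $\prod_{i\le j}b_i=0$, whence $\prod_{i\le j}a_i=0$ and $a_j=0$), but the product equality $\prod_i a_i=\prod_i b_i$ can read $0=0$ without the zero counts matching, so ``strip matching zeros'' is not justified as written. A clean repair that avoids taking logarithms of zero: log majorization implies weak majorization $\mba\prec_w\mbb$ (as the paper already records), and strict convexity together with monotonicity forces $\Phi$ to be strictly increasing; then write $\mba\le\mbc\prec\mbb$ for some $\mbc$ (the standard characterization of weak majorization), so that $\sum\Phi(a_i)\le\sum\Phi(c_i)\le\sum\Phi(b_i)$, and equality throughout gives $\mba=\mbc$ by strict monotonicity and $\mbc=\Theta\mbb$ by your Birkhoff/Jensen argument.
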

As exponentiating is strictly convex, an immediate corollary is $\mba\prec_{(\log)}\mbb$ and $\mba\prec\mbb$ implies $\mba=\Theta\mbb$.

\s Majorization is an incredibly powerful technique in matrix analysis used to prove numerous inequalities about eigenvalues and singular values of matrices, powers of products of positive semidefinite matrices, Golden-Thompson-like inequalities, and more. A good overview of the techniques and important results can be found in \cite{Hiai2014}, \cite{MajBook}. The two results that we will need for this paper will regard the eigenvalues of the sums of Hermitian matrices:

\begin{theorem}{(Fan \cite{Fan})}\label{abmaj} 
	Let $A,B\in M_{n\times n}(\C)$ be self-adjoint. Then \begin{equation}
		\lambda(A+B)\prec\lambda(A)+\lambda(B).
	\end{equation}
\end{theorem}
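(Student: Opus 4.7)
The plan is to prove this via the Ky Fan maximum principle, which states that for a self-adjoint $H \in M_{n\times n}(\C)$ with eigenvalues ordered $\lambda_1(H) \geq \cdots \geq \lambda_n(H)$,
$$\sum_{i=1}^k \lambda_i(H) \;=\; \max_{\dim V = k}\, \Tr(P_V H P_V),$$
where $P_V$ is the orthogonal projection onto $V$, and the maximum is attained on the span of the top $k$ eigenvectors. I would take this as the only nontrivial input and drive the rest by linearity.

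First, I would fix $1 \leq k \leq n$ and apply the principle to $H = A+B$. By linearity of trace,
$$\Tr\bigl(P_V(A+B)P_V\bigr) \;=\; \Tr(P_V A P_V) + \Tr(P_V B P_V),$$
and subadditivity of $\max$ over a sum immediately yields
$$\sum_{i=1}^k \lambda_i(A+B) \;\leq\; \max_V \Tr(P_V A P_V) \,+\, \max_V \Tr(P_V B P_V) \;=\; \sum_{i=1}^k \lambda_i(A) + \sum_{i=1}^k \lambda_i(B).$$
Because the sequences $\lambda_i(A)$ and $\lambda_i(B)$ are each nonincreasing in $i$, the componentwise sum $\lambda_i(A)+\lambda_i(B)$ is also nonincreasing in $i$, so the right-hand side is already the $k$-th partial sum of $\lambda(A)+\lambda(B)$ rearranged in descending order. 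This establishes weak majorization $\lambda(A+B) \prec_w \lambda(A)+\lambda(B)$.

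To promote this to full majorization, I would treat the case $k=n$ separately: since $\Tr(A+B) = \Tr(A) + \Tr(B)$, one has $\sum_{i=1}^n \lambda_i(A+B) = \sum_{i=1}^n \lambda_i(A) + \sum_{i=1}^n \lambda_i(B)$, giving equality of the total sums. Combined with the weak majorization above, this is exactly $\lambda(A+B) \prec \lambda(A)+\lambda(B)$.

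The only real obstacle is the Ky Fan variational principle itself; once that is in hand, the proof is a one-line application of linearity of trace plus $\max(x+y) \leq \max x + \max y$. Since this principle is a standard tool in the majorization literature cited in the paper (\cite{Hiai2014}, \cite{MajBook}), I would invoke it as known rather than deriving it from Courant--Fischer.
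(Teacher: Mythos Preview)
Your argument is correct and is in fact the standard proof of Fan's theorem: the Ky Fan maximum principle gives $\sum_{i=1}^k \lambda_i(A+B)\le \sum_{i=1}^k \lambda_i(A)+\sum_{i=1}^k \lambda_i(B)$ for each $k$, and equality of traces at $k=n$ upgrades weak majorization to majorization. There is nothing to compare against, however: the paper does not supply a proof of this theorem but merely states it with attribution to Fan as background for the majorization toolkit, so your proposal is strictly more than what the paper provides.
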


and the singular values of products of general matrices:

\begin{theorem}{(Gel'fand and Naimark \cite{GelNai50})}\label{sabweak} 
	Let $A,B\in M_{n\times n}(\C)$. Then \begin{equation}
		\sigma(AB)\prec_{(\log)}\sigma(A)\sigma(B).
	\end{equation}
\end{theorem}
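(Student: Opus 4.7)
The plan is to obtain log majorization via the antisymmetric tensor (exterior) power construction. For $X \in M_{n\times n}(\C)$ and $1 \leq k \leq n$, let $X^{\wedge k}$ denote the induced operator on $\Lambda^k \C^n$ (equivalently the $k$-th compound matrix). Two classical facts do all the work: functoriality, $(AB)^{\wedge k} = A^{\wedge k} B^{\wedge k}$, and the singular value identity $\|X^{\wedge k}\|_\infty = \prod_{i=1}^k \sigma_i(X)$, where $\|\cdot\|_\infty$ is the operator norm on $\Lambda^k \C^n$.

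Granting these, the first step is to establish weak log majorization. For each $1 \leq k \leq n$, submultiplicativity of the operator norm gives
\begin{equation*}
\prod_{i=1}^k \sigma_i(AB) = \|(AB)^{\wedge k}\|_\infty = \|A^{\wedge k} B^{\wedge k}\|_\infty \leq \|A^{\wedge k}\|_\infty \|B^{\wedge k}\|_\infty = \prod_{i=1}^k \sigma_i(A)\prod_{i=1}^k\sigma_i(B).
\end{equation*}
Because $\sigma(A)$ and $\sigma(B)$ are non-negative and arranged in descending order, the pointwise product $\sigma_i(A)\sigma_i(B)$ is automatically in descending order, so the right-hand side equals $\prod_{i=1}^k \sigma_i(A)\sigma_i(B)$, which is exactly what weak log majorization demands.

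The second step promotes this to full log majorization by checking equality at $k=n$. This is immediate from multiplicativity of the determinant together with $\prod_{i=1}^n \sigma_i(X) = |\det X|$, giving
\begin{equation*}
\prod_{i=1}^n \sigma_i(AB) = |\det(AB)| = |\det A|\,|\det B| = \prod_{i=1}^n \sigma_i(A)\sigma_i(B),
\end{equation*}
which turns the $k=n$ inequality into equality.

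The main obstacle is justifying the singular value identity $\|X^{\wedge k}\|_\infty = \prod_{i=1}^k \sigma_i(X)$. I would derive it from the SVD $X = U\Sigma V^*$: since the exterior power of a unitary is unitary, both $U^{\wedge k}$ and $(V^*)^{\wedge k}$ are unitary on $\Lambda^k \C^n$, while $\Sigma^{\wedge k}$ is diagonal in the basis $\{e_{i_1}\wedge \cdots \wedge e_{i_k} : i_1 < \cdots < i_k\}$ with diagonal entries $\sigma_{i_1}(X)\cdots \sigma_{i_k}(X)$. Combined with functoriality $X^{\wedge k} = U^{\wedge k}\Sigma^{\wedge k}(V^*)^{\wedge k}$, this is in fact the SVD of $X^{\wedge k}$, and the operator norm is the largest diagonal entry, attained at the index set $\{1,\ldots,k\}$ since the $\sigma_i(X)$ are arranged in descending order.
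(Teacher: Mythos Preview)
Your proof is correct. The paper itself does not prove this statement; it is quoted as a classical result with a citation to Gel'fand and Naimark, and is used as a black box in the proof of Theorem~\ref{t2}. The argument you give via exterior powers (compound matrices) is in fact the standard modern proof of this log majorization, so there is nothing to compare against and nothing to add.
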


\s We will also need a fairly intuitive lemma that to our knowledge has not yet been addressed in existing literature, characterizing the concatenation of majorized vectors:

\begin{lemma}\label{weaksum}
Let $\mbx\prec_w\mby$, and $\mba\prec_{w}\mbb$ be non-negative vectors labeled in descending order.  Then $\mbx\mba\prec_w\mby\mbb$.
\end{lemma}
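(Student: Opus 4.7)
The plan is to reduce the statement to a pair of partial-sum inequalities and then prove each by summation by parts. First observe that, because every vector is non-negative and sorted in descending order, the entrywise products satisfy $x_i a_i \geq x_{i+1} a_{i+1}$ and $y_i b_i \geq y_{i+1} b_{i+1}$ for every $i$. So $\mbx\mba$ and $\mby\mbb$ are themselves in descending order, and the definition of weak majorization reduces to showing
\begin{equation}
\sum_{i=1}^k x_i a_i \leq \sum_{i=1}^k y_i b_i, \qquad 1 \leq k \leq n,
\end{equation}
with no rearrangement step to worry about.

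I would prove this by interpolating through $\sum_{i=1}^k y_i a_i$, handling one factor at a time. Set $X_i = \sum_{j=1}^i x_j$ and $Y_i = \sum_{j=1}^i y_j$, so that $X_i \leq Y_i$ by $\mbx \prec_{w} \mby$. Abel summation gives
\begin{equation}
\sum_{i=1}^k x_i a_i = X_k a_k + \sum_{i=1}^{k-1} X_i (a_i - a_{i+1}).
\end{equation}
The coefficients $a_k$ and $a_i - a_{i+1}$ are all non-negative because $\mba$ is non-negative and descending, so replacing each $X_i$ with $Y_i$ produces the bound $\sum_{i=1}^k x_i a_i \leq \sum_{i=1}^k y_i a_i$. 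Now repeat the identical maneuver for the second factor: with $A_i, B_i$ the partial sums of $\mba, \mbb$ (so $A_i \leq B_i$), write
\begin{equation}
\sum_{i=1}^k y_i a_i = A_k y_k + \sum_{i=1}^{k-1} A_i (y_i - y_{i+1}),
\end{equation}
and use the non-negativity of $y_k$ and $y_i - y_{i+1}$ (since $\mby$ is non-negative descending) to substitute $A_i \mapsto B_i$ and conclude $\sum_{i=1}^k y_i a_i \leq \sum_{i=1}^k y_i b_i$. Chaining the two bounds yields the required partial-sum inequality.

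There is no serious obstacle: the argument is essentially a double invocation of summation by parts. The only point requiring care is verifying that $\mbx\mba$ and $\mby\mbb$ are themselves descending before applying the definition of weak majorization via partial sums, which is precisely why the descending and non-negativity hypotheses are imposed on all four input vectors simultaneously. If one wanted to extend beyond the descending-order hypothesis the product vectors would need to be re-sorted, and the Abel telescoping would no longer line up cleanly; this is exactly the obstruction that the present hypotheses sidestep.
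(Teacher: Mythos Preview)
Your proof is correct and is essentially the same argument as the paper's: both interpolate through $\sum_{i=1}^k y_i a_i$ and handle each step by Abel summation. The paper phrases the summation-by-parts as a telescoping decomposition $y_j = y_n + \epsilon_1 + \dots + \epsilon_{n-j}$ with $\epsilon_i \ge 0$, which unwinds to exactly your identity $\sum x_i a_i = X_k a_k + \sum_{i<k} X_i(a_i-a_{i+1})$; your presentation via the partial sums $X_i,Y_i,A_i,B_i$ is a bit more streamlined and directly yields the inequality for every $k$ at once, whereas the paper first treats $k=n$ and then truncates.
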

\begin{proof}
We can write the components of $\mby$ as $y_{n-1}=y_n+\epsilon_1$, $y_{n-2}=y_n+\epsilon_1+\epsilon_2$, $\dots$, $y_1=y_n+\epsilon_1+\dots+\epsilon_{n-1}$ where $\epsilon_i\geq 0$. Then applying $\mba\prec_{w}\mbb$ \begin{align}
\epsilon_{n-1}a_1&\leq \epsilon_{n-1}b_1 \\	
\epsilon_{n-2}\left(\sum_{i=1}^2a_i \right)&\leq \epsilon_{n-2}\left(\sum_{i=1}^2b_i \right)\\
&\vdots \\
y_n\left(\sum_{i=1}^na_i \right)&\leq y_n\left(\sum_{i=1}^nb_i \right)
\end{align}
and summing them all together, \begin{equation}
\sum_{i=1}^n y_ia_i\leq \sum_{i=1}^n y_ib_i.	
\end{equation}

\s Applying the same splitting argument to $a_i$ with $\mbx\prec_{w}\mby$ gives \begin{equation}
\sum_{i=1}^n x_ia_i\leq \sum_{i=1}^n y_ia_i,
\end{equation}	
and stringing the two inequalities together \begin{equation}
\sum_{i=1}^n x_ia_i\leq\sum_{i=1}^n y_ib_i.	
\end{equation}

\s Finally, nothing that when $\mba\prec_w\mbb$, the first $k$th components maintain the weak majorization relationship $(a_1,\dots,a_k)\prec_w(b_1,\dots,b_k)$, applying the argument to the $k$th components gives the desired result.
\end{proof}
\s Note that the above technique can be expressed compactly as the weigted sum of Ky-Fan norms for matrices $[\mba], [\mbb], [\mbx]$, and $[\mby]$, and leveraging the matrix majorization result that $A\prec_w B$ implies $|||A|||\leq |||B|||$ for every unitarily invariant norm $|||\cdot |||$. 

\section{Extensions And Counterexamples}
\label{maintheorem}

\s First, we address rearrangement of commuting matrices:

\begin{theorem}\label{ct}
Let $A,B\in M_{n\times n}(\C)$ be two self-adjoint matrices that commute. Then \begin{equation}\label{commuteineq}
||\sigma(A)+\sigma(B)||_p^p+||\sigma(A)-\sigma(B)||_p^p\leq ||A+B||_p^p+||A-B||_p^p\leq ||\sigma_\uparrow(A)+\sigma_{\downarrow}(B)||_p^p+||\sigma_\uparrow(A)-\sigma_{\downarrow}(B)||_p^p	
\end{equation}
for $1\leq p\leq 2$, and the inequality reverses for $p\geq 2$. Furthermore, there is equality in either inequality for $p\neq 1,2$ if and only if $A$ and $B$ are aligned in the extremized arrangement. 
\end{theorem}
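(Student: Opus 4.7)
The plan is to simultaneously diagonalize and reduce the problem to a scalar rearrangement inequality for an explicit two-variable kernel. Since $A$ and $B$ are commuting self-adjoint, there is an orthonormal basis in which they are simultaneously diagonal with real eigenvalues $\alpha_i$ and $\beta_i$; the singular values of $A$, $B$, and $A\pm B$ are then $(|\alpha_i|)$, $(|\beta_i|)$, and $(|\alpha_i\pm\beta_i|)$. The elementary identity $\{|\alpha+\beta|,|\alpha-\beta|\}=\{|\alpha|+|\beta|,\,||\alpha|-|\beta||\}$ (as unordered pairs, for any reals $\alpha,\beta$) gives
$$||A+B||_p^p+||A-B||_p^p=\sum_{i=1}^n F(|\alpha_i|,|\beta_i|),\qquad F(x,y):=(x+y)^p+|x-y|^p,$$
so each of the three expressions in \eqref{commuteineq} has the form $\sum_i F(\widetilde a_i,\widetilde b_i)$ for a particular pairing of the entries of $\sigma_\downarrow(A)$ with those of $\sigma_\downarrow(B)$: the left-hand side takes the same-order pairing, the middle takes the pairing induced by the common eigenbasis, and the right-hand side takes the opposite-order pairing.

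The core technical step is to show that $F$ is (strictly) submodular on $\R_+^2$ for $1\leq p\leq 2$ and (strictly) supermodular for $p\geq 2$. Precisely, for $x_1\geq x_2\geq 0$ and $y_1\geq y_2\geq 0$ set
$$\Delta(x_1,x_2;y_1,y_2):=F(x_1,y_1)+F(x_2,y_2)-F(x_1,y_2)-F(x_2,y_1);$$
I claim $\Delta\leq 0$ for $1\leq p\leq 2$ and $\Delta\geq 0$ for $p\geq 2$, with strict inequality whenever $p\notin\{1,2\}$ and $x_1>x_2$, $y_1>y_2$. For $p\neq 1,2$ this follows from the cross-partial, which for $x\neq y$ evaluates to
$$\partial_x\partial_y F(x,y)=p(p-1)\bigl[(x+y)^{p-2}-|x-y|^{p-2}\bigr];$$
the map $t\mapsto t^{p-2}$ is decreasing for $1<p<2$ and increasing for $p>2$, and the diagonal singularity of $|x-y|^{p-2}$ is integrable because $p>1$, so $\Delta$ equals the signed double integral of the cross-partial over $[x_2,x_1]\times[y_2,y_1]$ and carries the claimed sign. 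The boundary values are checked directly: at $p=1$, $F(x,y)=2\max(x,y)$ is submodular but not strictly, and at $p=2$, $F(x,y)=2x^2+2y^2$ makes $\Delta\equiv 0$.

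With the kernel property in hand, the classical adjacent-swap argument finishes the proof: any pairing can be converted into the same-order (respectively opposite-order) configuration by a sequence of adjacent transpositions of inverted pairs, each of which is an application of $\Delta\leq 0$ (respectively $\Delta\geq 0$) to that pair. Summing the effects of these swaps yields both halves of \eqref{commuteineq} for $1\leq p\leq 2$ and their reversal for $p\geq 2$. For the equality analysis with $p\notin\{1,2\}$, every non-trivial swap changes the sum strictly, so equality forces the common eigenbasis itself to already realize the extremal alignment of the singular values---both descending for the left inequality, and $\sigma_\uparrow(A)$ matched with $\sigma_\downarrow(B)$ for the right. The main obstacle is verifying $\Delta\leq 0$ for $1<p<2$, which requires integrating the cross-partial through its integrable singularity along $x=y$; beyond this, only standard ingredients from rearrangement theory are used.
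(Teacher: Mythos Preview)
Your argument is correct and shares the paper's opening reduction: simultaneous diagonalization, the identity $\{|\alpha+\beta|,|\alpha-\beta|\}=\{|\alpha|+|\beta|,||\alpha|-|\beta||\}$, and the observation that everything hinges on the sign of $\partial_x\partial_y F$ for $F(x,y)=(x+y)^p+|x-y|^p$. Where you diverge is in how that sign is exploited. The paper packages the singular values as step functions on $[0,n)$ and invokes the Almgren--Lieb extension of the Riesz rearrangement inequality; for the right-hand inequality this forces an ad hoc detour through a piecewise kernel in $1/y$ (to turn a decreasing rearrangement of $y$ into an increasing one) together with an invertibility/scaling reduction, and the equality case is handled by a mollifier argument. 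Your route stays discrete: the submodularity of $F$ (supermodularity for $p\ge 2$) feeds directly into the adjacent-swap rearrangement lemma, which simultaneously yields both extremal pairings and, via strictness of $\Delta$ for $p\notin\{1,2\}$, the equality characterization. This is more elementary and treats the two halves symmetrically; the paper's approach, by contrast, connects the result to the continuous rearrangement theory that motivated the conjectures in the first place. Your handling of the integrable singularity of $|x-y|^{p-2}$ along the diagonal (absolute continuity of $\partial_y F$ in $x$ suffices for the iterated fundamental theorem of calculus) is the only point requiring care, and you flag it appropriately.
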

\begin{proof}
In the mutually diagonalizable basis, we can write \begin{equation}
A=\begin{bmatrix} \lambda_1(A) & & \\ & \ddots & \\ & & \lambda_n(A)\end{bmatrix},\qquad B=\begin{bmatrix} \lambda_{i_1}(B) & & \\ & \ddots & \\ & & \lambda_{i_n}(B)\end{bmatrix}.
\end{equation}
Then we note that the singular values of $A+B$ and $A-B$ can be grouped as \begin{equation}\label{samerow}
\{|\lambda_j(A)\pm \lambda_{i_j}(B)|  \}=\{||\lambda_j(A)|\pm |\lambda_{i_j}(B)||  \}=\{|\sigma_{i}(A)\pm \sigma_{k_i}(B)|  \}.
\end{equation}
Re-labeling to preserve the pairings above, we consider the functions \begin{align}
f(x)&=\sum_{i=1}^n \sigma_i(A)\chi_{[i-1,i)}(x)\label{r1}	\\
g(x)&=\sum_{i=1}^n \sigma_{k_i}(B)\chi_{[i-1,i)}(x)	\label{r2}
\end{align}
and let $f^\ast$ and $g^\ast$ denote the spherically symmetric decreasing rearrangements. 

\s We will need an extension of the Riesz rearrangement inequality: 

\begin{lemma}{(Almgren, Lieb \cite{almlieb1989} Theorem 2.2). }\label{sdl}
Let $F:\R^+\times\R^+\rightarrow\R^+$ be a continuous function such that $F(0,0)=0$ and \begin{equation}\label{CP}
F(u_2, v_2)+F(u_1, v_1) \geq F(u_2, v_1)+F(u_1, v_2)\end{equation}
whenever $u_1\geq u_2\geq 0$ and $v_1\geq v_2\geq 0$. Then for $f,g$ in the class $\mathcal{C}_0$ (ie $f^\ast$, $g^\ast$ are well-defined), the inequality \begin{equation}\label{CP2}
\int F(f(x),g(x)) \diff\mathcal{L}^n x \leq \int F(f^\ast(x),g^\ast(x)) \diff\mathcal{L}^n x\end{equation}
holds. If condition \ref{CP} is reversed, the inequality \ref{CP2} is reversed.
\end{lemma}

\s The following technique is inspired by \cite{Carlen2006} [Lemma 1.1], which in fact proves a more general theorem on symmetric decreasing arrangements of general complex functions. For the left half of our inequality, we choose 
\begin{equation}
F(x,y)=|x+y|^p+|x-y|^p.
\end{equation}

\s We see that $\partial^2F(x,y)/\partial x\partial y\leq 0$ when $1< p\leq 2$, with the inequality switching at $p=2$, satisfying the condition of Equation \ref{CP}. Then
\begin{equation}
||f+g||_p^p+||f-g||_p^p\geq ||f^\ast+g^\ast||_p^p+||f^\ast-g^\ast||_p^p	
\end{equation} 
for $1< p\leq 2$ (and taking the limit for $p=1$), with the inequality switching for $p\geq 2$. As $||f\pm g||_p^p=||A\pm B||_p^p$ and $||f^\ast\pm g^\ast||_p^p=||\sigma(A)\pm\sigma(B)||_p^p$, the left half of Equation \ref{commuteineq} is proven.

\s For the right half, without loss of generality, we can assume that $B$ is invertible; otherwise, we could consider a limit of perturbations. As the inequality for matrices $A,B$ holds if and only if it holds for $cA, cB$ for some scaling constant $c$, we can further assume that the largest singular value of $B$ is equal to 1. We define piecewise functions such as \begin{equation}
	F(x,y)=\begin{cases} \left|x+\frac{1}{y}\right|^p +\left|x-\frac{1}{y}\right|^p	& x\geq 0,\; y\geq 1 \\
		e^{1-\frac{1}{y}}\left( \left|x+\frac{1}{y}\right|^p +\left|x-\frac{1}{y}\right|^p \right)	& x\geq 0, \;0\leq y<1 \end{cases}
\end{equation}
for $1\leq p\leq 2$ and \begin{equation}
	F(x,y)=\begin{cases} \left|x+\frac{1}{y}\right|^p +\left|x-\frac{1}{y}\right|^p	& x\geq 0,\; y\geq 1 \\
		e^{p(1-y)}\left(|x+y|^p+|x-y|^p  \right)	& x\geq 0, \;0\leq y<1 \end{cases}
\end{equation}
for $p\geq 2$. The values of the function $F$ that we care about will be in the $y\geq 1$ range; these are merely examples of functions existing that satisfy the necessary conditions to apply Theorem \ref{sdl}.

\s It can be readily confirmed that $F(x,y)$ is continuous, and by exponential domination in the limit $F(0,0)=0$. We therefore calculate the partial derivative on each piece and see that $\partial^2F(x,y)/\partial x\partial y\geq 0$ when $1< p\leq 2$, with the inequality reversing at $p=2$, satisfying the condition of Equation (\ref{CP}). Then letting \begin{align}
	f(x)&=\sum_{i=1}^n \sigma_i(A)\chi_{[i-1,i)}(x)\label{r3}	\\
	g(x)&=\sum_{i=1}^n (\sigma_{k_i}(B))^{-1}\chi_{[i-1,i)}(x)	\label{r4}
\end{align}
and comparing $\int F(f(x),g(x)) \diff x$ and $\int F(f^\ast(x),g^\ast(x)) \diff x$ (and taking the limit for $p=1$), the full inequality is proven.

\s To characterize the equality cases, we can consider $F$ as the limit of twice-differentiable functions with $W(x-y)=\epsilon^{-1}\exp(|x-y|/\epsilon)$ and take the limit as $\epsilon\rightarrow 0$. One can express \begin{equation}\begin{split}
		\int \int F&(f(x),g(y))W(x-y)\diff\mathcal{L}^nx \diff\mathcal{L}^n y\\ &=\int\int F_{12}(s,t)\left[ \int\int \chi_{\{ f>s \}}(x)\chi_{\{ g>t \}}(y)W(x-y)\diff\mathcal{L}^nx \diff\mathcal{L}^n y\right]\diff\mathcal{L}^1s \diff\mathcal{L}^1 t
	\end{split}
\end{equation}
and apply the Riesz rearrangement inequality to the interior integral. Note that for piecewise $f$ and $g$ as defined in Equations (\ref{r1}) and (\ref{r2}) or (\ref{r3}) and (\ref{r4}), there is strict inequality in application of the Riesz rearrangement inequality if the functions are not aligned \cite{rreq}. When we take the limit, $W$ converges to the delta distribution, and we have \begin{equation}\begin{split}\label{i}
		\int\int &F_{12}(s,t)\left[ \int\int \chi_{\{ f>s \}}(x)\chi_{\{ g>t \}}(y)\delta(x-y)\diff\mathcal{L}^nx \diff\mathcal{L}^n y\right]\diff\mathcal{L}^1s \diff\mathcal{L}^1 t \\ & \geq \int\int F_{12}(s,t)\left[ \int\int \chi_{\{ f^\ast>s \}}(x)\chi_{\{ g^\ast>t \}}(y)\delta(x-y)\diff\mathcal{L}^nx \diff\mathcal{L}^n y\right]\diff\mathcal{L}^1s \diff\mathcal{L}^1 t 
	\end{split}
\end{equation}

\s Suppose, as in our case, that $F_{12}(s,t)>0$ for $s,t>0$, and that the matrices $A$ and $B$ are unaligned. Then there exists at least one interval $[i-1,i)$ and values $0<c_0<c_1$ such for $x\in [i-1,i)$ and $c_0<c<c_1$ where $f^\ast(x), g^\ast(x)>c$, but either $f(x)>c$ and $g(x)\leq c$ or $f(x)\leq c$ and $g(x)>c$; otherwise, the matrices are in fact aligned to the equality case. Then $ \chi_{\{ f>s \}}(x)\chi_{\{ g>t \}}(x)=0$ and $ \chi_{\{ f^\ast>s \}}(x)\chi_{\{ g^\ast>t \}}(x)=1$ for $s,t\in [c_0, c_1]$. This means that the interior integral will be strictly greater when $f$ and $g$ are in decreasing rearrangements, and we conclude that the inequality (\ref{i}) must be strict.  
\end{proof}

\s Next, we address the case when anticommutator $\{A,B \}=0$.

\begin{theorem}
Let $A, B\in M_{n\times n}(\C)$ be self-adjoint such that $AB+BA=0$. Then \begin{equation}\label{tt1}
||A+B||_p^p+||A-B||_p^p\geq ||\sigma(A)+\sigma(B)||_p^p+||\sigma(A)-\sigma(B)||_p^p
\end{equation}
for $1\leq p\leq 2$, with the inequality reversing for $p\geq 2$.
\end{theorem}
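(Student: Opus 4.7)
The plan is to exploit the anticommutator condition to collapse the left-hand side of (\ref{tt1}) into a single trace, then bound it by a two-step majorization sandwich. The first observation is that $\{A,B\}=0$ forces $(A\pm B)^2=A^2\pm(AB+BA)+B^2=A^2+B^2$, so that $|A+B|=|A-B|=(A^2+B^2)^{1/2}$ and consequently
\begin{equation*}
||A+B||_p^p+||A-B||_p^p=2\Tr[(A^2+B^2)^{p/2}].
\end{equation*}
The task thus reduces to comparing this single trace against $\sum_i\bigl[(\sigma_i(A)+\sigma_i(B))^p+|\sigma_i(A)-\sigma_i(B)|^p\bigr]$.

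I would use $(\sigma_i(A)^2+\sigma_i(B)^2)_i$ as the intermediate vector. Applying Fan's inequality (Theorem \ref{abmaj}) to the self-adjoint matrices $A^2$ and $B^2$, and noting that $\lambda_i(A^2)=\sigma_i(A)^2$ when eigenvalues are labelled in decreasing order, gives
\begin{equation*}
\lambda(A^2+B^2)\prec(\sigma_i(A)^2+\sigma_i(B)^2)_i.
\end{equation*}
Since $x\mapsto x^{p/2}$ is concave on $\R^+$ for $1\leq p\leq 2$ and convex for $p\geq 2$, Theorem \ref{maj} applied to $\pm x^{p/2}$ (where the monotonicity hypothesis can be dropped because we have full, not merely weak, majorization) yields
\begin{equation*}
\Tr[(A^2+B^2)^{p/2}]\geq\sum_{i=1}^n(\sigma_i(A)^2+\sigma_i(B)^2)^{p/2}
\end{equation*}
for $1\leq p\leq 2$, with the inequality reversing for $p\geq 2$.

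The second step is the pointwise scalar inequality
\begin{equation*}
2(a^2+b^2)^{p/2}\geq(a+b)^p+|a-b|^p\qquad(a,b\geq 0,\;1\leq p\leq 2),
\end{equation*}
again reversing for $p\geq 2$. Substituting $u=a+b$ and $v=|a-b|$ gives $u^2+v^2=2(a^2+b^2)$, reducing the claim to $2\bigl((u^2+v^2)/2\bigr)^{p/2}\geq u^p+v^p$, which is Jensen's inequality for the concave function $x^{p/2}$ applied at the two points $u^2$ and $v^2$. Summing over $i=1,\dots,n$ and chaining with the first step delivers (\ref{tt1}) and its reverse.

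The main obstacle is really just spotting the initial collapse $(A\pm B)^2=A^2+B^2$; once this algebraic identity is in hand, the rest is a routine deployment of Fan's inequality followed by one scalar Jensen step, and no structure specific to the anticommuting case is used beyond that single identity. The boundary $p=2$ makes every inequality in the chain an equality (both sides reduce to $2||A||_2^2+2||B||_2^2$ by the parallelogram law), and $p=1$ is recovered by continuity from $p>1$.
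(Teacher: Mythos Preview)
Your proof is correct and follows essentially the same route as the paper: collapse via $(A\pm B)^2=A^2+B^2$, then Fan's majorization $\lambda(A^2+B^2)\prec\sigma(A)^2+\sigma(B)^2$ combined with concavity/convexity of $x^{p/2}$, followed by the scalar Jensen step (the paper writes this last step as $\sigma_i(A)^2+\sigma_i(B)^2=\tfrac12(\sigma_i(A)+\sigma_i(B))^2+\tfrac12(\sigma_i(A)-\sigma_i(B))^2$ and applies concavity, which is exactly your $u,v$ substitution). The only cosmetic difference is that you invoke Theorem~\ref{maj} explicitly for the majorization step, whereas the paper states it directly.
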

\begin{proof}
We note that as $\lambda(X^2)=\sigma(X^2)$ for sef-adjoint $X$, \begin{equation}
||A+B||_p^p=\sum_{i=1}^n\lambda_i((A+B)^2)^{p/2}=\sum_{i=1}^n\lambda_i(A^2+B^2)^{p/2}=\sum_{i=1}^n\lambda_i((A-B)^2)^{p/2}=||A-B||_p^p	
\end{equation}

\s When $1\leq p\leq 2$, we make use of the majorization identity of Theorem \ref{abmaj} of $\lambda(A+B)\prec\lambda(A)+\lambda(B)$ and the fact that $f(x)=x^{p/2}$ is concave to conclude that \begin{align}
||A+B||_p^p+||A-B||_p^p&=2\sum_{i=1}^n\lambda_i(A^2+B^2)^{p/2}	\\
&\geq 2 \sum_{i=1}^n(\lambda_i(A^2)+\lambda_i(B^2))^{p/2}\\
&= 2 \sum_{i=1}^n(\sigma_i(A)^2+\sigma_i(B)^2)^{p/2}\\
&= 2 \sum_{i=1}^n\left(\frac{(\sigma_i(A)+\sigma_i(B))^2}{2}+\frac{(\sigma_i(A)-\sigma_i(B))^2}{2}\right)^{p/2} \\
&\geq \sum_{i=1}^n((\sigma_i(A)+\sigma_i(B))^2)^{p/2}+((\sigma_i(A)-\sigma_i(B))^2)^{p/2} \\
&=||\sigma(A)+\sigma(B)||_p^p+||\sigma(A)-\sigma(B)||_p^p
\end{align}
An identical argument for $p\geq 2$ with reversed inequalities can be made now leveraging convexity of $x^{p/2}$.

\s Note that this proof extends to general $A,B$ when $AB^\ast+BA^\ast=0$.
\end{proof}

\s The unitary case gives some insight to the role of the anticommutator. 

\begin{theorem}\label{tu}
Let $U,V\in M^{n\times n}(\C)$ unitary. Then \begin{equation}\label{u1}
||U+V||_p^p+||U-V||_p^p\geq 2^pn
\end{equation}
for $1\leq p\leq 2$, with the inequality switching for $p\geq 2$.	There is equality for $p\neq 2$ if and only if $U=V$. The extremization of the inequality is directly dependent on $\sigma(UV+VU)$, with greatest difference when $\{U,V \}=0$.
\end{theorem}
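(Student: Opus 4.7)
\s The plan is to reduce everything to a scalar concavity/convexity estimate on the spectrum of the Hermitian matrix $M := U^*V + V^*U$. Since $U^*U = V^*V = I$, direct expansion gives
\begin{equation*}
(U\pm V)^*(U\pm V) \;=\; 2I \pm M,
\end{equation*}
and because $W := U^*V$ is unitary, $M = W + W^*$ has eigenvalues $\mu_j = 2\cos\theta_j \in [-2,2]$, where $e^{i\theta_j}$ are the eigenvalues of $W$. After suitable ordering, the singular values of $U\pm V$ are therefore $\sqrt{2\pm\mu_j}$, so
\begin{equation*}
||U+V||_p^p + ||U-V||_p^p \;=\; \sum_{j=1}^n \bigl[(2+\mu_j)^{p/2} + (2-\mu_j)^{p/2}\bigr].
\end{equation*}

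\s I would then analyze the scalar function $h(x) := (2+x)^{p/2} + (2-x)^{p/2}$ on $[-2,2]$, noting that $h(\pm 2) = 2^p$. A short computation of $h''$ shows that $h$ is strictly concave for $1\leq p<2$ and strictly convex for $p>2$ (with $h\equiv 4$ in the degenerate case $p=2$). The horizontal chord joining the endpoints is the constant $2^p$, so concavity yields $h(x)\geq 2^p$ for $1\leq p\leq 2$, while convexity yields $h(x)\leq 2^p$ for $p\geq 2$, with strict inequality except at $x=\pm 2$. Summing over $j$ delivers (\ref{u1}) and its reverse for $p\geq 2$.

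\s For the equality statement when $p\neq 2$, strictness of the concavity/convexity forces every $\mu_j \in \{-2,2\}$, so $W = U^*V$ must be a Hermitian unitary squaring to the identity; translating this back to $U$ and $V$ yields the claimed rigidity. For the extremization claim, observe that $|h(x) - 2^p|$ is maximized over $[-2,2]$ at the midpoint $x = 0$, so the difference between $||U+V||_p^p + ||U-V||_p^p$ and $2^p n$ is controlled by how close the eigenvalues of $M$ lie to zero and is maximal precisely when $M = 0$. For self-adjoint $U,V$ this coincides with the anticommutation relation $\{U,V\}=0$, since then $M = UV + VU$; in general one mediates the relationship to $\sigma(UV+VU)$ via the identity $UV + VU = U(V + U^*VU)$.

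\s The main obstacle I anticipate is making the equality characterization and the $\sigma(UV+VU)$-based extremization precise. The scalar argument only forces $U^*V$ to be a Hermitian unitary with $(U^*V)^2 = I$, a strictly larger class than $\{U = V\}$, so sharpening to the stated condition requires an additional spectral alignment argument. Likewise, for non-self-adjoint $U,V$ the spectrum of $M$ and the singular values of $UV+VU$ are not literally the same data, so linking the size of the gap in (\ref{u1}) to $\sigma(UV+VU)$ calls for a unitary-conjugation step on top of the core convexity reduction.
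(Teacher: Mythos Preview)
Your spectral reduction via $M=U^*V+V^*U$ and the scalar function $h(x)=(2+x)^{p/2}+(2-x)^{p/2}$ is essentially the paper's argument; the only difference is that the paper first passes to self-adjoint unitaries through the block dilation $\widehat{U}=\bigl[\begin{smallmatrix}0&U\\U^*&0\end{smallmatrix}\bigr]$ (so that the Hermitian matrix in play is literally the anticommutator $\widehat{U}\widehat{V}+\widehat{V}\widehat{U}$) and then checks monotonicity of $f(\theta)=(1+\cos\theta)^s+(1-\cos\theta)^s$ by a first-derivative sign computation rather than your concavity argument. The inequality and its reversal go through identically either way.

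Your hesitation about the equality case is warranted, and you should \emph{not} look for an additional alignment argument: the characterization ``$U=V$'' is false as stated. Your analysis correctly shows that equality for $p\neq 2$ forces every $\mu_j\in\{-2,2\}$, i.e.\ $(U^*V)^2=I$, and this is already sharp---take $U=I$, $V=\mathrm{diag}(1,-1)$, for which $||U+V||_p^p+||U-V||_p^p=2^p+2^p=2^p\cdot 2$ while $U\neq V$. The paper makes the matching slip from the other side: it analyzes $f(\theta)$ only on $(0,\tfrac{\pi}{2})$ and concludes the extremum occurs solely at $\theta=0$, overlooking the symmetry $f(\pi-\theta)=f(\theta)$, so $\theta=\pi$ (eigenvalue $-2$) gives equality as well. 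The correct rigidity is that $U^*V$ is a self-adjoint unitary. For the extremization clause, the paper's block trick is exactly the ``unitary-conjugation step'' you anticipated: after dilation one works with genuine self-adjoint unitaries, and the gap in (\ref{u1}) is governed by the spectrum of the honest anticommutator $\widehat{U}\widehat{V}+\widehat{V}\widehat{U}$, whose diagonal blocks $UV^*+VU^*$ and $U^*V+V^*U$ are unitarily conjugate to each other and hence to your $M$.
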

\begin{proof}
Note that Equation \ref{u1} can be directly derived from \cite{mccarthy1967} the Clarkson type inequalities 
\begin{equation}
2(||A||_p^p+||B||_p^p)\leq ||A+B||_p^p+||A-B||_p^p \leq 2^{p-1}(||A||_p^p+||B||_p^p)
\end{equation}
for $p\geq 2$ and reversing for $1\leq p\leq 2$; and in fact can be seen from direct matrix inequalities of Theorems 2.1 and 2.5 of \cite{BOURIN2020170}. However, we can use majorization to examine this inequality on the level of the eigenvalues to see the direct role of the anticommutator. 

We can assume without loss of generality that $U$ and $V$ are also self-adjoint; otherwise, consider the unitary matrices \begin{equation}
	\widehat{U}=\begin{bmatrix} 0 & U \\ U^\ast & 0\end{bmatrix}, \qquad \widehat{V}=\begin{bmatrix} 0 & V \\ V^\ast & 0\end{bmatrix}
\end{equation}
then the inequality holds for $U, V$ if and only if it holds for $\widehat{U}, \widehat{V}$ by dividing by the appropriate factor of 2. 

\s Once more we will make use of $\lambda((U\pm V)^2)=\sigma((U\pm V)^2)$. We note that $UV+VU$ is a Hermitian matrix, and as $||UV+VU||\leq ||UV||+||VU||\leq 2||U||||V||=2$, the eigenvalues of $UV+VU$ must be within the interval $[-2,2]$, and can be written as $2\cos(\theta_j)$. Then \begin{align}
||U+V||_p^p+||U-V||_p^p&=||(U+V)^2||_{p/2}^{p/2}+||(U-V)^2||_{p/2}^{p/2} \\
&=||\lambda(2I+UV+VU)||_{p/2}^{p/2}+||\lambda(2I-UV-VU)||_{p/2}^{p/2}\\
&=||2+\lambda(UV+VU)||_{p/2}^{p/2}+||2-\lambda(UV+VU)||_{p/2}^{p/2}\\
&=\sum_{j=1}^n2^{p/2}|1+\cos(\theta_j)|^{p/2}+2^{p/2}|1-\cos(\theta_j)|^{p/2}
\end{align}

\s The function $f(\theta)=(1+\cos(\theta))^s+(1-\cos(\theta))^s$ can be examined on the interval $(0, \frac{\pi}{2})$. It has derivative $s\sin(\theta)[(1-\cos(\theta))^{s-1}-(1+\cos(\theta))^{s-1}]$, which can only be 0 at $\theta=0, \frac{\pi}{2}$. It is immediately confirmed that the function monotone for all $s\geq 0$ is minimized at $\theta=0$ and maximized at $\theta=\frac{\pi}{2}$ for $0\leq s\leq 1$, with the maximum and minimum reversing for $s\geq 1$.  Therefore, \begin{align}
\sum_{j=1}^n2^{p/2}|1+\cos(\theta_j)|^{p/2}+2^{p/2}|1-\cos(\theta_j)|^{p/2} \geq \sum_{j=1}^n2^{p/2}|2|^{p/2}=n2^p
\end{align}
and the rearrangement inequality holds as desired for $1\leq p\leq 2$, with the inequality reversing for $p\geq 2$. As the desired extrema are reached only at $\theta=0$, then if there is equality for $p>2$, we must have $\theta_j=0$ for all $j$, and hence $UV=VU=I$. As $U$ is self-adjoint and unitary, we know that $U^{-1}=U$, and hence we conclude $V=U$. The alternative extrema are reached when $\theta_j=\frac{\pi}{2}$ for all $j$, and hence $UV+VU=0$.
\end{proof}

\s We finally expand upon the ranges of Conjectures \ref{c1} and \ref{c2} as originally seen in \cite{Carlen2006}, and comment on how this can lead to counterexamples. 

\begin{theorem}\label{t1}
Let $A,B\in M_{n\times n}(\C)$ be self-adjoint with $A\geq B\geq 0$. Then \begin{equation}\label{e2}
||A+B||_p^p+||A-B||_p^p\geq ||\sigma(A)+\sigma(B)||_p^p+||\sigma(A)-\sigma(B)||_p^p		
\end{equation}
with the inequality reversing for $2\leq p\leq 3$. There is equality for $p\neq 1,2$ if and only if there is equality in the entire range $1\leq p\leq 3$.
\end{theorem}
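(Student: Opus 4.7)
The plan is to prove the theorem in three stages: handle $p = 2$ by direct computation (equality), establish $p = 3$ via an algebraic expansion plus von Neumann's trace inequality, and bridge $p \in (2, 3)$ via an integral representation of fractional powers.

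At $p = 2$, the cross terms cancel in $\Tr((A+B)^2) + \Tr((A-B)^2) = 2\Tr(A^2) + 2\Tr(B^2)$, matching the aligned expression $\sum(a_i+b_i)^2 + \sum(a_i-b_i)^2$ exactly, so equality is automatic. At $p = 3$, cyclicity of trace yields $\Tr((A+B)^3) + \Tr((A-B)^3) = 2\Tr(A^3) + 6\Tr(AB^2)$, while the aligned side evaluates to $2\Tr(A^3) + 6\sum_i a_ib_i^2$; the inequality thus reduces to $\Tr(AB^2) \leq \sum_i a_ib_i^2$, which is von Neumann's trace inequality for the PSD operators $A$ and $B^2$ whose singular values are $(a_i)$ and $(b_i^2)$.

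For $p \in (2,3)$, setting $q = p - 2 \in (0,1)$, I would invoke the representation
\[t^q = \frac{\sin(q\pi)}{\pi}\int_0^\infty s^{q-1}\,\frac{t}{t+s}\,ds, \qquad t \geq 0,\]
so that for any PSD $X$,
\[\Tr(X^p) = \Tr(X^2 X^q) = \frac{\sin(q\pi)}{\pi}\int_0^\infty s^{q-1}\,\Tr\bigl(X^3(X+s)^{-1}\bigr)\,ds.\]
The partial-fraction identity $X^3(X+s)^{-1} = X^2 - sX + s^2 I - s^3(X+s)^{-1}$ together with summation over $X \in \{A+B,\, A-B\}$ makes the contributions of $\Tr(X^2)$, $\Tr(X)$, and $n$ agree with the aligned side (the same moment equality already used at $p=2$), so the difference of the two sides of the target inequality reduces to
\[-\frac{\sin(q\pi)}{\pi}\int_0^\infty s^p\left[\Tr\bigl((A+B+sI)^{-1}\bigr) + \Tr\bigl((A-B+sI)^{-1}\bigr) - \sum_i\frac{1}{a_i+b_i+s} - \sum_i\frac{1}{a_i-b_i+s}\right]ds.\]
Since $\sin(q\pi) > 0$ for $q \in (0,1)$, the claim at $p \in (2,3)$ follows from the pointwise-in-$s$ \emph{resolvent inequality}
\[\Tr\bigl((A+B+sI)^{-1}\bigr) + \Tr\bigl((A-B+sI)^{-1}\bigr) \geq \sum_i\frac{1}{a_i+b_i+s} + \sum_i\frac{1}{a_i-b_i+s}.\]

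The main technical obstacle is proving this resolvent inequality for all $s > 0$. I would approach it by passing to the block matrices $M = \begin{pmatrix}A & B \\ B & A\end{pmatrix}$ and $M_0 = \begin{pmatrix}[\mba] & [\mbb] \\ [\mbb] & [\mba]\end{pmatrix}$, whose spectra are the multisets $\sigma(A+B)\cup\sigma(A-B)$ and $\{a_i+b_i\}\cup\{a_i-b_i\}$, so the required bound becomes $\Tr((M+sI)^{-1}) \geq \Tr((M_0+sI)^{-1})$. Reducing to a basis diagonalizing $A$ and combining the pinching inequality with Ky Fan's eigenvalue bound (Theorem \ref{abmaj}) together with the operator convexity of $x \mapsto 1/(x+s)$ should yield the comparison. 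For the equality characterization: equality at $p = 3$ forces equality in the von Neumann bound $\Tr(AB^2) = \sum_i a_ib_i^2$, which via the Gel'fand--Naimark log-majorization (Theorem \ref{sabweak}) and the strict equality case of Lemma \ref{strict} forces $A$ and $B^2$ (hence $A$ and $B$) to be simultaneously diagonalizable with matching decreasing eigenvalue orders. In this aligned case both sides of the inequality coincide for every $p$, so equality at any $p \in (1,2) \cup (2,3]$ propagates to the entire range $[1,3]$.
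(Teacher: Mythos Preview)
Your reduction is the paper's reduction: via the integral representation of $C^p$ the polynomial-in-$C$ terms cancel between the two sides, and everything collapses to the pointwise resolvent comparison
\[
\Tr\bigl((A+B+sI)^{-1}\bigr)+\Tr\bigl((A-B+sI)^{-1}\bigr)\ \geq\ \sum_i\frac{1}{a_i+b_i+s}+\sum_i\frac{1}{a_i-b_i+s}\qquad(s>0).
\]
The paper does not reprove this inequality---it simply cites \cite{Carlen2006}---and then observes (exactly as you do) that the representation valid for $2<p<3$ carries the resolvent term with the opposite sign, flipping the conclusion. Your separate $p=3$ argument via $\Tr(AB^2)\leq\sum_i a_ib_i^2$ is correct and pleasant, though redundant once the resolvent bound is in hand.

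The genuine gap is in your sketch of the resolvent inequality itself. Any argument through the block matrices $M,M_0$ that combines pinching, Ky Fan (Theorem~\ref{abmaj}), and (operator) convexity of $x\mapsto(x+s)^{-1}$ must ultimately factor through the majorization $\lambda(M_0)\prec\lambda(M)$: pinchings and averages of unitary conjugates are exactly the doubly-stochastic channels, and Uhlmann's theorem identifies their range with the majorization order. But $\lambda(M_0)\prec\lambda(M)$ is \emph{false} under $A\geq B\geq 0$. Take $A=\Diag(3,1)$ and $B=\tfrac12\bigl(\begin{smallmatrix}1&1\\1&1\end{smallmatrix}\bigr)$, so $\mba=(3,1)$, $\mbb=(1,0)$; then $\lambda_1(M_0)=a_1+b_1=4$ while $\lambda_1(M)=\lambda_1(A+B)=\tfrac{5+\sqrt5}{2}<4$, and the first partial sum already fails. (The resolvent inequality nonetheless holds in this example---the needed comparison is strictly finer than any spectral majorization between $M$ and $M_0$.) The argument in \cite{Carlen2006} instead expands $(A\pm B+t)^{-1}$ as a Neumann series in $K=(A+t)^{-1/2}B(A+t)^{-1/2}$ and bounds each $\Tr[(A+t)^{-1}K^{2m}]$ term-by-term, exploiting $A\geq B\geq 0$ directly; you will need that or something equally tailored. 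A secondary omission: your three stages cover only $p\in[2,3]$, whereas the theorem also asserts the forward inequality on $[1,2)$ and the equality clause for $p\in(1,2)$. Both follow from the same resolvent bound plugged into the $1<p<2$ representation, but this should be said explicitly, and your equality argument (currently routed only through $p=3$) should be extended to arbitrary $p\neq1,2$ via the positivity of the integrand.
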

\begin{proof}
For a positive matrix $C$ and $1<p<2$, for positive normalization constant $k_p$ we have \begin{equation}
C^p=k_p\int_0^\infty\left(\frac{C}{t^2}-\frac{1}{t}+\frac{1}{t+C} \right)t^p dt.
\end{equation}
We can therefore express the difference between sides in Equation \ref{e2} for $1<p<2$ by the integral representation after cancellation as \begin{equation}\label{e00}
k_p\Tr\left[\int_0^\infty\left(\frac{1}{A+B+t}+\frac{1}{A-B+t}-\frac{1}{\sigma(A)+\sigma(B)+t}-\frac{1}{\sigma(A)-\sigma(B)+t} \right)t^p dt\right].
\end{equation}
In \cite{Carlen2006} it is proven that when $A\geq B\geq 0$, this integrand is always positive semidefinite. Therefore, the integral is zero is and only if it is zero everywhere, if and only if Equation \ref{e00} is zero. This would happen independent of $p$, and hence if there is equality for some $1<p<2$, there must be equality for all $1\leq p\leq 2$.

\s To extend the range to $2\leq p\leq 3$, we see that \begin{align}
C^p=k_pC\int_0^\infty\left(\frac{C}{t^2}-\frac{1}{t}+\frac{1}{t+C} \right)t^p dt&=k_p\int_0^\infty\left(\frac{C^2}{t^2}-\frac{C}{t}+\frac{C}{t+C} \right)t^p dt\\
&=k_p\int_0^\infty\left(\frac{C^2}{t^3}-\frac{C}{t^2}+\frac{1}{t}-\frac{1}{t+C} \right)t^{p+1} dt
\end{align}
The first three terms of the integral cancel completely between each side of Equation \ref{e2}, and now as the sign of the final term is reversed, the argument for $1<p<2$ is reversed.
\end{proof}

\begin{wrapfigure}[26]{l}{0.5\textwidth}
	\includegraphics[width=0.45\textwidth]{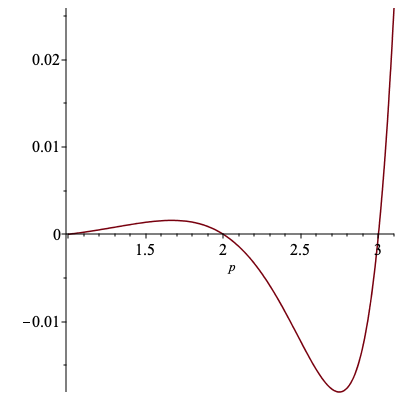}
	\caption{$||\sigma(A)+\sigma(B)||_p^p+||\sigma(A)-\sigma(B)||_p^p-||A+B||_p^p-||A-B||_p^p$ for $1\leq p\leq 3.1$, demonstrating the opposite expected behavior on the intervals $1\leq p\leq 2$ and $2\leq p\leq 3$.}
	\label{f1} 
\end{wrapfigure}

\s The obvious question is whether or not it is possible to relax the requirement that $A\geq B\geq 0$, perhaps even to $A+B, A-B\geq 0$.  The answer is: it is not.

\begin{counterexample}\label{ce1}
The matrices \begin{equation}
A=\begin{bmatrix} 6 & 0 \\ 0 & 5\end{bmatrix}, \qquad B=\begin{bmatrix} 0 & 1 \\ 1 & 0 \end{bmatrix}	
\end{equation}
have the property $A+B, A-B\geq 0$, and \begin{equation}
||A+B||_p^p+||A-B||_p^p \leq ||\sigma(A)+\sigma(B)||_p^p+||\sigma(A)-\sigma(B)||_p^p
\end{equation}
for $1\leq p\leq 2$ and $p\geq 3$, with the inequality reversing between $2\leq p\leq 3$.
\end{counterexample}

\s A plot of Counterexample \ref{ce1} can be seen in Figure \ref{f1}. This counterexample hinges on the fact that we chose $B$ to be unitary, so the ``up-down" rearrangement and the ``aligned" rearrangements were the same. In this case, as $A$ and $B$ satisfied the requirements of Theorem \ref{t2} (the extension of Conjecture \ref{c2}) but not of Theorem \ref{t1}, $||\sigma(A)\pm\sigma(B)||_p^p$ was treated as the ``up-down" and not the ``aligned" case.

\s Our proof of Theorem \ref{t2} is very similar to our proof of Theorem \ref{t1} which drew heavy inspiration from the proofs in \cite{Carlen2006}. However, it diverges from \cite{Carlen2006} in a very important manner: in \cite{Carlen2006}, the rearrangement inequalities in the integral representation required both $A,B\geq 0$. Therefore for Conjecture \ref{c2}, they first proved $||A+B||_p^p+||A-B||_p^p\leq ||A+|B|||_p^p+||A-|B|||_p^p$ for $1\leq p\leq 2$, then working with positive matrices $A$ and $|B|$ addressed the rearrangement. As monotonicity of $X^p$ was required, this does not extend as easily to  $2\leq p\leq 3$ as the proof of Theorem \ref{t1} did. We instead use majorization in the integral representation, removing the need to consider $|B|$ at all, which then allows us to extend the range without trouble:

\s

\s

\begin{theorem}\label{t2}
Let $A,B\in M_{n\times n}(\C)$ be self-adjoint with $A+B,A-B\geq 0$ and $\sigma_n(A)\geq \sigma_1(B)$. Then \begin{equation}\label{e1}
||A+B||_p^p+||A-B||_p^p\leq ||\sigma_\uparrow(A)+\sigma_{\downarrow}(B)||_p^p+||\sigma_\uparrow(A)-\sigma_{\downarrow}(B)||_p^p		
\end{equation}
with the inequality reversing for $2\leq p\leq 3$. There is equality for $p\neq 1,2$ if and only if $A$ and $B$ commute and they have simultaneous diagonalizations with diagonals $\sigma_\uparrow(A)$ and $\sigma_\downarrow(B)$, and hence there is equality in the entire range $1\leq p\leq 3$.
\end{theorem}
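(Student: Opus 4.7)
\s The plan is to follow the integral-representation template of Theorem~\ref{t1} through the trace reduction, and then to replace the Carlen--Lieb operator-positivity step by a trace inequality proved via majorization. For $1<p<2$, expand each of $(A+B)^p$, $(A-B)^p$, $[\sigma_\uparrow(A)+\sigma_\downarrow(B)]^p$, $[\sigma_\uparrow(A)-\sigma_\downarrow(B)]^p$ using the formula $C^p=k_p\int_0^\infty(C/t^2-I/t+(t+C)^{-1})t^p\,dt$ with $k_p>0$; all four matrices are positive semidefinite (the diagonal ones because the hypothesis $\sigma_n(A)\geq\sigma_1(B)$ makes $\sigma_\uparrow(A)-\sigma_\downarrow(B)$ entrywise nonnegative), so the representation is valid. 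After taking traces and subtracting the two sides of (\ref{e1}), the $C/t^2$ and $-I/t$ pieces cancel because $\Tr(A+B)+\Tr(A-B)=2\Tr(A)=\Tr[\sigma_\uparrow(A)+\sigma_\downarrow(B)]+\Tr[\sigma_\uparrow(A)-\sigma_\downarrow(B)]$, so the whole theorem reduces for $1<p<2$ to showing $\Tr[(t+A+B)^{-1}]+\Tr[(t+A-B)^{-1}]\leq \Tr[(t+[\sigma_\uparrow(A)+\sigma_\downarrow(B)])^{-1}]+\Tr[(t+[\sigma_\uparrow(A)-\sigma_\downarrow(B)])^{-1}]$ for every $t>0$.

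\s The heart of the proof is the pointwise heat-trace inequality $\Tr[e^{-s(A+B)}]+\Tr[e^{-s(A-B)}]\leq \Tr[e^{-s[\sigma_\uparrow(A)+\sigma_\downarrow(B)]}]+\Tr[e^{-s[\sigma_\uparrow(A)-\sigma_\downarrow(B)]}]$ for every $s>0$, which then upgrades to the resolvent inequality above by multiplying by $e^{-st}$ and integrating, via $\Tr[(t+X)^{-1}]=\int_0^\infty e^{-st}\Tr[e^{-sX}]\,ds$ valid for $X\geq 0$ and $t>0$. Since $[\sigma_\uparrow(A)]$ and $[\sigma_\downarrow(B)]$ commute (both being diagonal in the same basis), the right-hand side collapses exactly to $2\sum_i e^{-s\sigma_{n-i+1}(A)}\cosh(s\sigma_i(B))$. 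For the left-hand side I apply the Golden--Thompson inequality separately to the two exponentials, $\Tr[e^{-s(A\pm B)}]\leq \Tr[e^{-sA}e^{\mp sB}]$, and sum to obtain the bound $\Tr[e^{-sA}(e^{-sB}+e^{sB})]=2\Tr[e^{-sA}\cosh(sB)]$. Then the von~Neumann trace rearrangement inequality (a consequence of Gel'fand--Naimark, Theorem~\ref{sabweak}, together with the passage from log to weak majorization recorded in Section~\ref{majorization}) applied to the positive operators $e^{-sA}$ and $\cosh(sB)$ yields $\Tr[e^{-sA}\cosh(sB)]\leq \sum_i\lambda_i^\downarrow(e^{-sA})\lambda_i^\downarrow(\cosh(sB))=\sum_i e^{-s\sigma_{n-i+1}(A)}\cosh(s\sigma_i(B))$, using that $\cosh$ is even so $\lambda(\cosh(sB))=\{\cosh(s\sigma_i(B))\}$. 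Concatenating these bounds closes the heat-trace inequality.

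\s For $2\leq p\leq 3$, substitute the shifted representation $C^p=k_p\int_0^\infty(C^2/t^3-C/t^2+I/t-(t+C)^{-1})t^{p+1}\,dt$; the new $C^2/t^3$ contribution also cancels because $\Tr((A+B)^2)+\Tr((A-B)^2)=2\Tr(A^2)+2\Tr(B^2)$ agrees with its singular-value counterpart by unitary invariance of the Hilbert--Schmidt norm, and the surviving $(t+C)^{-1}$ term appears with opposite sign, reversing the overall inequality throughout the entire range. The main obstacle I anticipate is the equality case for $p\notin\{1,2\}$: having the integrand vanish identically in $t$ will require simultaneous equality in both Golden--Thompson (which is known to force $[A,B]=0$) and in the von~Neumann rearrangement (which forces the common eigenbasis of $A$ and $|B|$ to pair the smallest $\sigma_i(A)$ with the largest $\sigma_j(B)$), and combining these with the strict log-majorization criterion of Lemma~\ref{strict} should yield precisely the ``up-down'' simultaneous diagonalization asserted in the statement.
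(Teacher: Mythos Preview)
Your approach is correct and genuinely different from the paper's. Both proofs share the integral-representation reduction to the resolvent trace inequality
\[
\Tr[(t+A+B)^{-1}]+\Tr[(t+A-B)^{-1}]\leq \Tr[(t+[\sigma_\uparrow(A)+\sigma_\downarrow(B)])^{-1}]+\Tr[(t+[\sigma_\uparrow(A)-\sigma_\downarrow(B)])^{-1}],
\]
and the shifted representation for $2\leq p\leq 3$. From there the paper substitutes $H=A+t$, $K=H^{-1/2}BH^{-1/2}$, expands $(I\pm K)^{-1}$ as a Neumann series, and bounds each term $\Tr[H^{-1/2}K^{2m}H^{-1/2}]$ via repeated applications of Gel'fand--Naimark log majorization (Theorem~\ref{sabweak}) together with the concatenation Lemma~\ref{weaksum}; the equality case then falls out of Hiai's strict criterion (Lemma~\ref{strict}). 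Your route instead passes through the Laplace transform to a pointwise heat-trace inequality, which you settle in one stroke with Golden--Thompson followed by the von~Neumann trace inequality. This is cleaner: it avoids the term-by-term series estimate and the bespoke Lemma~\ref{weaksum}, and it makes the equality analysis more transparent, since equality in Golden--Thompson is well known to force $[A,B]=0$, after which the von~Neumann equality pins down the up--down alignment. The paper's approach, on the other hand, stays entirely inside the majorization machinery developed in Section~\ref{majorization} and does not need to import Golden--Thompson. One small point worth making explicit in your write-up: vanishing of the $t$-integrand for all $t>0$ forces vanishing of the heat-trace integrand for all $s>0$ by injectivity of the Laplace transform on nonnegative continuous functions, which is what lets you invoke the Golden--Thompson and von~Neumann equality cases simultaneously.
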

\begin{proof}
Once more, we use the integral representation. We can express the difference between sides in Equation \ref{e1} for $1<p<2$ by the integral representation after cancellation as \begin{equation}\label{e0}
k_p\Tr\left[\int_0^\infty\left(\frac{1}{A+B+t}+\frac{1}{A-B+t}-\frac{1}{\sigma_\uparrow(A)+\sigma_\downarrow(B)+t}-\frac{1}{\sigma_\uparrow(A)-\sigma_\downarrow(B)+t} \right)t^p dt\right]
\end{equation}

\s We will show that the integrand is always negative. We make the substitution $H=A+t$, $K=H^{-1/2}BH^{-1/2}$, then \begin{equation}
(A\pm B+t)^{-1}=H^{-1/2}(I\pm K)^{-1}H^{-1/2}=H^{-1/2}\left(\sum_{n=0}^{\infty}(-1)^n(\pm K)^n \right)H^{-1/2}
\end{equation}
and \begin{equation}\label{matside}
\frac{1}{A+B+t}+\frac{1}{A-B+t}=2H^{-1/2}\left(\sum_{n=0}^{\infty}K^{2n} \right)H^{-1/2}	
\end{equation}

\s For each $m$, we notice that $K^{2m}$ is a positive matrix, and hence $H^{-1/2}K^{2m}H^{-1/2}$ is positive, and hence the eigenvalues and singular values are the same. Therefore, \begin{align}
\Tr[H^{-1/2}K^{2m}H^{-1/2}]&=\sum_{i=1}^n\sigma_i(H^{-1/2}K^{2m}H^{-1/2})	\\
&=\sum_{i=1}^n\sigma_i(H^{-1/2}(H^{-1/2}BH^{-1/2})^{2m}H^{-1/2})	\\
&=\sum_{i=1}^n\sigma_i(H^{-1/2}(H^{-1/2}BH^{-1/2})^{m})^2	\\
&\leq\sum_{i=1}^n\sigma_i(H^{-1/2})^2\sigma_i((H^{-1/2}BH^{-1/2})^{m})^2	\\
&\leq\sum_{i=1}^n\sigma_i(H^{-1})\sigma_i(H^{-1/2})^{2m}\sigma_i(B)^{2m}\sigma_i(H^{-1/2})^{2m}	\\
&=\sum_{i=1}^n\sigma_{n+1-i}(H)^{-2m-1}\sigma_i(B)^{2n} \label{final}
\end{align}
This string makes repeated use of the majorization inequalities from Theorem \ref{sabweak} and Lemmas \ref{weakpower} and \ref{weaksum}. Furthermore, there is equality for $p\neq 1,2$ if and only if the integrand is always 0, and there is equality throughout. As we made use of log majorization $\sigma(AB)\prec_{(\log)}\sigma(A)\sigma(B)$, by Lemma \ref{strict} this must imply that $\sigma(AB)=\sigma(A)\sigma(B)$, which happens if and only if $A$ and $B$ commute with singular values aligned. Reversing the expansion trick from line \ref{final} gives $\frac{1}{\sigma_\uparrow(A)\pm\sigma_\downarrow(B)+t}$ as desired, completing our proof for $1<p<2$. The same integral representation for $2\leq p\leq 3$ as in Theorem \ref{t1} now extends the range.
\end{proof}

\begin{figure}[t!]
	\centering
	\begin{minipage}{0.47\textwidth}
		\centering
		\includegraphics[width=0.95\textwidth]{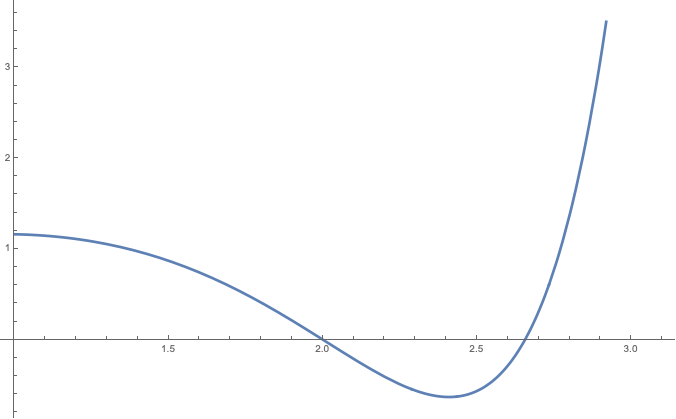} 
		\caption{$||\sigma_\uparrow(C)+\sigma_\downarrow(D)||_p^p+||\sigma_\uparrow(C)-\sigma_\downarrow(D)||_p^p-||C+D||_p^p-||C-D||_p^p$ for $1\leq p\leq 3$, demonstrating the expected behavior on the interval $1\leq p\leq 2$, and contrary behavior within $2\leq p\leq 3$.}
		\label{f2}
	\end{minipage}\hfill
	\begin{minipage}{0.47\textwidth}
		\centering
		\includegraphics[width=0.95\textwidth]{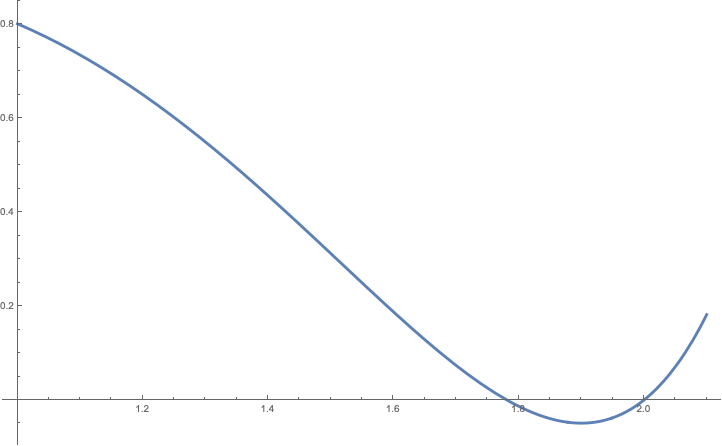} 
		\caption{$||\sigma(C)+\sigma(D)||_p^p+||\sigma(C)-\sigma(D)||_p^p-||C+D||_p^p-||C-D||_p^p$ with contrary behavior within $1\leq p\leq 2$.}
		\label{f3} 
	\end{minipage}
\end{figure}

\s An obvious counterexample to Conjecture \ref{c2} for all ranges are any pair of unitary matrices, as shown by Thoerem \ref{tu}. However, there are matrices that hold in the range $1\leq p\leq 2$, but not in the range $2\leq p\leq 3$, as demonstrated by Counterexample \ref{ce2} and Figure 2. In fact, these matrices $C$ and $D$ also provide a counterexample to Conjecture \ref{c1}, as seen in Figure 3.

\begin{counterexample}\label{ce2}
The matrices \begin{equation}
C=\begin{bmatrix} 6 & 0 \\ 0 & -1\end{bmatrix}, \qquad D=\begin{bmatrix} -1.97035 & 1.72243 \\ 1.72243 & 1.79035 \end{bmatrix}	
\end{equation}
are a counterexample for both Conjecture \ref{c1} and Conjecture \ref{c2}, with contrary behavior for Conjecture \ref{c1} within the interval $1\leq p\leq 2$; and contrary behavior for Conjecture \ref{c2} within the interval $2\leq p\leq 3$.
\end{counterexample}

\section*{Acknowledgements}
This research was funded by the NDSEG Fellowship, Class of 2017.  Thank you to my advisor, Professor Eric Carlen, for bringing my attention to the problem and providing me with a background to the subject.
\bibliographystyle{spmpsci}   
\bibliography{references} 
\end{document}